\renewcommand{\emptyset}{\varnothing}
\renewcommand\section{\@startsection {section}{1}{\z@}%
                                   {-3.5ex \@plus -1ex \@minus -.2ex}%
                                   {2.3ex \@plus.2ex}%
                                   {\normalfont\LARGE\bfseries}}
\renewcommand\subsection{\@startsection{subsection}{2}{\z@}%
                                     {-3.25ex\@plus -1ex \@minus -.2ex}%
                                     {1.5ex \@plus .2ex}%
                                     {\normalfont\Large\bfseries}}
\renewcommand\subsubsection{\@startsection{subsubsection}{3}{\z@}%
                                     {-3.25ex\@plus -1ex \@minus -.2ex}%
                                     {1.5ex \@plus .2ex}%
                                     {\normalfont\large\bfseries}}
\theoremstyle{definition}
\newtheorem{remark}{Remark}
\newtheorem{conjecture}{Conjecture}
\newtheorem{example}{Example}
\newtheorem{definition}{Definition}
\newtheorem{Symbol}{Notation}
\newtheorem{theorem}{Theorem}
\newtheorem{lemma}{Lemma}
\newtheorem{corollary}{Corollary}
\newtheorem*{agreement}{Agreement}
\providecommand{\keywords}[1]
{
  \textbf{\textit{Keywords---}} #1
}
\author{I.\,V.~Novikov}
\title{\textbf{Percolation of three fluids on a honeycomb lattice}}
\date{}
\begin{document}

\setcounter{section}{0}

\maketitle
\setcounter{section}{0}
\begin{abstract}
    In this paper, we consider a generalization of percolation: percolation of three related fluids on a honeycomb lattice. K.~Izyurov and A.~Magazinov proved that percolations of distinct fluids between opposite sides on a fixed hexagon become mutually independent as the lattice step tends to $0$. This paper exposes this proof in details (with minor simplifications) for nonspecialists. In addition, we state a few related conjectures based on numerical experiments.
\end{abstract}

\keywords{Fourier-Walsh transform, Potts model, percolation, honeycomb lattice, Boolean functions.}

\tableofcontents

\section{Introduction}
    In this paper, we consider a generalization of percolation: percolation of three related fluids on a honeycomb lattice. We prove (see Theorem \ref{sides}) that percolations of distinct fluids between opposite sides of a fixed hexagon become mutually independent as the lattice step tends to~$0$. This was a conjecture by M. Skopenkov proved independently by K.~Izyurov and A.~Magazinov approximately at the same time (private communication). The proof is based on the Fourier-Walsh expansion and Kesten's theorem. This paper exposes this proof in details (with minor simplifications) for nonspecialists. We also state a few new related conjectures based on numerical experiments.

    The paper is organized as follows. In \S2, we introduce key definitions, and also state Main Theorem \ref{sides} and several conjectures. In \S3, we introduce the Fourier-Walsh expansion. In \S4, we expose the proof of Theorem \ref{sides} based on the Fourier-Walsh expansion. In \S5, we describe results of numerical experiments related to conjectures from \S2.
\section{Main theorem}
     For any $n>1$ consider a honeycomb lattice of step $\frac{1}{n}$. Let $P$ be a regular hexagon with the side~$1$ centered at the center of some cell~$O$. Denote by $M_n$ the set of all the cells contained in $P$. Consider the probability space~$\Omega$ consisting of all the colorings of the cells of the set~$M_n$ into $4$ colors, denoted by $0$, $1$, $2$, $3$, with the measure $P(B)=|B|/4^{|M_n|}$ for any $B \subset \Omega$. The probability space $\Omega$ is called \textit{the four-state Potts model at infinite temperature.}

    \begin{definition}
    \label{4 color sides}
        Fix a number $k=1$, $2$ or $3$, and also a coloring of the cells of the set~$M_n$ into $4$ colors. We say that \textit{fluid $k$ percolates between two sets $A, B \subset~M_n$}, if some cell of the set $A$ is joined with some cell of the set $B$ by a chain of adjacent cells such that each cell in the chain has color $0$ or $k$, including the initial cell of the set $A$ and the final cell of the set $B$.
    \end{definition}
\subsection{Percolation between sides}
    We say that \textit{a cell $x \in M_n$ belongs to a side $A$ of the hexagon $P$}, if the cell $x$ is a boundary cell and $A$ is the nearest side to $x$. A cell can belong to more than one side (if the cell has several nearest sides). In what follows, by \textit{side $A$} we mean the set of all the cells belonging to the side $A$ of the hexagon~$P$.

    Label the sides of the hexagon $P$ by numbers from $1$ to $6$ counterclock-wise. For $k=1, 2$ or $3$ denote by $B_{k,n} \subset \Omega$ the set of all the colorings such that fluid $k$ percolates between sides $k$ and $k+3$ of the hexagon~$P$. It is easy to show that events $B_{1,n}, B_{2,n}, B_{3,n}$ are pairwise independent: $P(B_{1,n} \cap B_{2,n}) - P(B_{1,n})P(B_{2,n})=0$.

    \begin{theorem}[K.~Izyurov, A.~Magazinov, 2018]
    \label{sides}
        $$\displaystyle{\lim_{n \to +\infty}} \left[P(B_{1,n} \cap B_{2,n} \cap B_{3,n}) - P(B_{1,n}) P(B_{2,n}) P(B_{3,n})\right] = 0.$$
    \end{theorem}

    \begin{remark}
    Theorem \ref{sides} holds in a much more general situation. For example, a regular hexagon~$P$ can be replaced by an arbitrary polygon, and opposite sides can be replaced by arbitrary pairs of sides. Nevertheless, for simplicity of the proof, we consider percolation between opposite sides of a regular hexagon.
    \end{remark}

    Informally, Theorem \ref{sides} states that percolations of distinct fluids between opposite sides become mutually independent as $n$ tends to~$\infty$. We take a difference of probabilities rather than a ratio to avoid proving that $P(B_{1,n}) P(B_{2,n}) P(B_{3,n})$ is bounded from zero.

    In addition, we state the following conjecture.
    \begin{conjecture}
    \label{hyp3}
        $P(B_{1,n} \cap B_{2,n} \cap B_{3,n}) \geqslant P(B_{1,n}) P(B_{2,n}) P(B_{3,n})$ for each $n$.
    \end{conjecture}

    Since events $B_{1,n}, B_{2,n}, B_{3,n}$ are independent, the conjecture states that percolations of distinct fluids are positively correlated, i.e., $$P(B_{1,n}|B_{2,n} \cap B_{3,n}) \geqslant B_{1,n}.$$

\subsection{Percolation from the center}
    \begin{definition}
    \label{4 color center}
        Fix a number $k=1$, $2$ or $3$, and also a coloring of the cells of the set $M_n$ into $4$ colors. We say that \textit{fluid $k$ percolates from a cell $x \in M_n$ to a set $A \in M_n$}, if $x$ is joined with some cell of the set $A$ by a chain of adjacent cells such that each cell in the chain has color $0$ or $k$, including the final cell of the set $A$ and \textit{not} including the initial cell $x$.
    \end{definition}

    Note that in Definition \ref{4 color center}, unlike Definition \ref{4 color sides}, the color of the initial cell $x$ does not matter. That is convenient to avoid a factor of $2$ in the inequalities of Conjectures \ref{hyp1} and \ref{hyp2} below.

    For $k = 1, 2$, and $3$ denote by $A_{k,n} \subset \Omega$ the set of colorings such that fluid $k$ percolates from the cell $O$ to the set of all the boundary cells of the set $M_n$.

    Let us state 2 conjectures.

    \begin{conjecture}
    \label{hyp1}
        $P(A_{1,n} \cap A_{2,n} \cap A_{3,n}) \geqslant P(A_{1,n}) P(A_{2,n}) P(A_{3,n})$ for each $n$.\\
    \end{conjecture}

    \begin{conjecture}
    \label{hyp2}
        $\displaystyle{\lim_{n \to +\infty}} \frac{P(A_{1,n} \cap A_{2,n} \cap A_{3,n})}{P(A_{1,n}) P(A_{2,n}) P(A_{3,n})} > 1$.
    \end{conjecture}


    In addition, we state one more conjecture that generalizes Conjectures \ref{hyp3} and \ref{hyp1}.
    \begin{conjecture}
    \label{hyp4}
        Let $A_1, A_2, A_3, B_1, B_2, B_3 \subset M_n$. Denote by $U_k = $ \{fluid $k$ percolates between $A_k$ and~$B_k$\}. Then
        $$P(U_{1} \cap U_{2} \cap U_{3}) \geqslant P(U_{1}) P(U_{2}) P(U_{3}).$$
    \end{conjecture}
    If Conjecture \ref{hyp4} is true, then $P(U_{1}|U_{2} \cap U_{3}) \geqslant P(U_{1})$, i.e., percolations between any three pairs of subsets are positively correlated.

\subsection{Example}
    \begin{wrapfigure}{o}{0.2\linewidth} 
        \center{\includegraphics[width=1\linewidth]{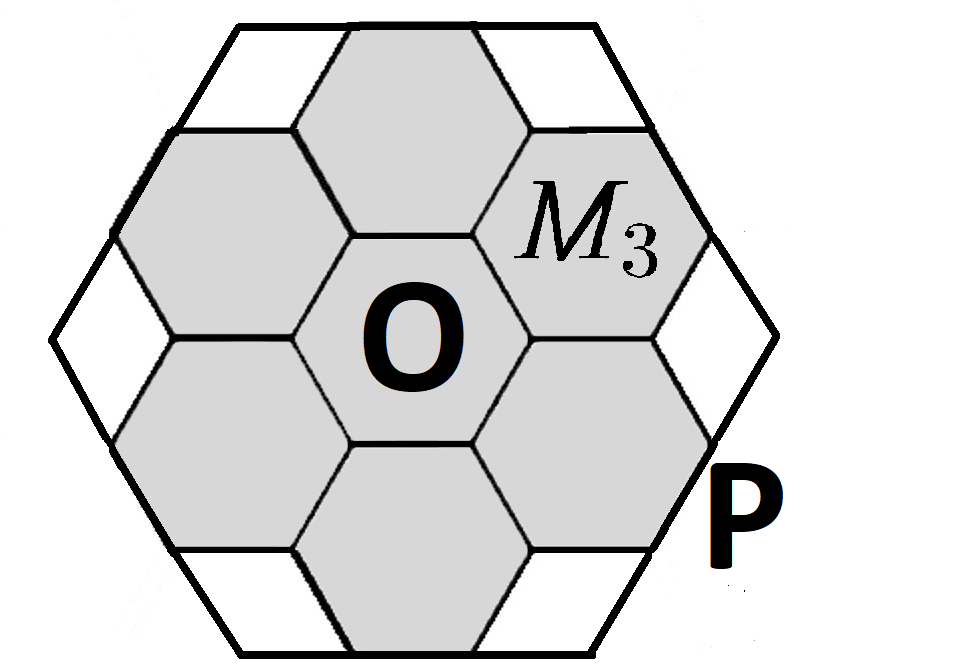}}
        \caption{}
    \end{wrapfigure}
    We illustrate the above notions by an example.

    \begin{example}
        Let $n=3$. In Figure 1 we see a regular hexagon~$P$ on a honeycomb lattice of step $\frac{1}{3}$. The set $M_3$ consists of 7 gray cells. It is easy to show that
        $$P(A_{1,3}) = P(A_{2,3}) = P(A_{3,3}) = 1 - \left(\frac{1}{2}\right)^6 = 0,984375;$$
        $$P(A_{1,3} \cap A_{2,3} \cap A_{3,3}) = 1 - 3\left(\frac{1}{2}\right)^6 + 3 \left(\frac{1}{2}\right)^{12} =0,953857421875;$$
        $$P(B_{1,3}) = P(B_{2,3}) = P(B_{3,3}) = \frac{1}{4}\left(\frac{1}{2} + \frac{1}{2} \cdot \frac{7}{16}\right)= \frac{23}{128} = 0,1796875;$$
        $$P(B_{1,3} \cap B_{2,3} \cap B_{3,3}) =
        \frac{1}{4} \cdot \left(\frac{1}{2}\right)^6 + \frac{3}{4} \cdot \left( 9 \left(\frac{1}{4}\right)^6 + 2 \left(\frac{1}{2}\right)^2 \left(\frac{1}{4}\right)^4\right) = 0,00701904296875.$$

        Then
        $$\frac{P(A_{1,3} \cap A_{2,3} \cap A_{3,3})}{P(A_{1,3}) P(A_{2,3}) P(A_{3,3})} = \frac{250048}{250047} > 1
        \text{ and }$$
        $$P(B_{1,3} \cap B_{2,3} \cap B_{3,3}) - P(B_{1,3}) P(B_{2,3}) P(B_{3,3}) \approx 0,01.$$
        This agrees with Conjectures \ref{hyp3} and \ref{hyp1}, and also show that both events $A_{1,3}, A_{2,3}, A_{3,3}$ and events $B_{1,3}, B_{2,3}, B_{3,3}$ are mutually \textit{dependent}.
    \end{example}
\subsection{Kesten's Theorem}
    The model introduced above naturally generalizes the classical percolation model (when cells are paint into 2 colors). In that model, the following famous result, used in the proof of Theorem \ref{sides}, holds.

    \begin{definition}[cf. Definition \ref{4 color center}]
    \label{2 color center}
        Paint cells of some finite set $M$ into $2$ colors $\pm 1$. We say that \textit{there exists percolation from a cell $x \in M$ to a set $A \subset M$} for that coloring, if $x$ is joined with some cell of the set $A$ by a chain of adjacent cells such that each cell in the chain has color $+1$, including the final cell of the set $A$ and \textit{not} including the initial cell $x$. We introduce the measure $P(B)~=~|B|/2^m$ on the set of all the colorings of the set $M$ into 2 colors.
    \end{definition}

    \begin{theorem}(H. Kesten, 1982, cf. \cite[Theorem~9.6]{Ariel Yadin}).
        \label{kesten lite} The probability that there exists percolation from the cell $O$ to the boundary of the set $M$ tends to $0$ as $n\to\infty$.
    \end{theorem}

\section{The Fourier-Walsh expansion}
    \label{section fourier}
        In this section, we introduce the notion of the Fourier-Walsh expansion and consider some properties of the coefficients of that expansion. The content of \S\S \ref{Fourier subsection 1}-\ref{Fourier subsection 3} is borrowed from \cite[\S\S 1.2-1.4]{boolean functions}. Lemma~\ref{formula probability} from \S 3.4 is similar to a result from \cite[\S 2.2]{boolean functions}. Lemma~\ref{inequality} from \S 3.4 is contained in \cite[\S 3.6]{boolean functions} as an exercise.

    \begin{agreement}
        In what follows, on the set $\{-1,+1\}^m$, where $m~\in~\mathbb{N}$, fix the measure $P(B) = |B|/2^m$ for any $B \subset \{-1,+1\}^m$. Functions $f \colon \{-1,+1\}^m \to \mathbb{R}$ are considered as random variables on $\{-1,+1\}^m$.
    \end{agreement}

\subsection{Definition}
\label{Fourier subsection 1}

    \begin{definition}
    \label{Fourier}
        Let $m \in \mathbb{N}$. The \textit{Fourier-Walsh expansion of a function $$f \colon \{-1,+1\}^m \to \mathbb{R}$$} is its representation as the sum $$f(x_1, \ldots, x_m) = \sum_{S \subset \{1, 2, \ldots, m\}} \widehat{f_{S}} \prod_{i \in S} x_i,$$ where $\widehat{f_{S}}$ are some real numbers that are called the \textit{ Fourier-Walsh coefficients}. If $S = \emptyset$, then we put by definition $\prod_{i \in S} x_i = 1$.
    \end{definition}

    \begin{remark}
         In what follows, we write $\widehat{f_{S}}$ for the coefficient of a term $\prod_{i \in S} x_i$ in the Fourier-Walsh expansion of a function $f \colon \{-1,+1\}^m \to \mathbb{R}$.
    \end{remark}

    \begin{theorem}
        For any $f \colon \{-1,+1\}^m \to \mathbb{R}$, there exists a unique Fourier-Walsh expansion.
    \end{theorem}
    \begin{proof} $ $\newline
        \textit{Existence}. For each point $a =(a_1, a_2, \ldots, a_m) \in \mathbb{R}^m$, where $a_i = \pm 1$, consider the function $$1_{a}(x_1, x_2, \ldots, x_m) = \left(\frac{1+a_1 x_1}{2}\right)\left(\frac{1+a_2 x_2}{2}\right) \cdots \left(\frac{1+a_m x_m}{2}\right),$$ where $x_i = \pm 1$. Note that
        \begin{equation*}
            1_{a}(x)=
            \begin{cases}
                $1$, &\text{if } x = a;\\
                $0$, &\text{otherwise.}
            \end{cases}
        \end{equation*}
        Therefore, any function $f:\{-1, +1\}^m \rightarrow \mathbb{R}$ can be written as $$f(x)= {\sum_{a \in \{-1,+1\}^m}} f(a) 1_a(x).$$ Expanding this expression we obtain the required expansion.

        \textit{Uniqueness}. All the functions $f:\{-1, +1\}^m~\rightarrow~\mathbb{R}$ form a \mbox{$2^m$-dimensional} vector space. And there are exactly $2^m$ monomials of a form $\prod_{i \in S} x_i$, where $S~\subset~\{1,2, \ldots, m \}$. Since any function is a linear span of such monomials, it follows that these monomials form a basis. Hence the Fourier-Walsh coefficients are unique.
    \end{proof}

\subsection{A formula for the Fourier-Walsh coefficients}
\label{Fourier subsection 2}
    In this section, we introduce a formula for the Fourier-Walsh coefficients in terms of the expectation of a random variable (Lemma \ref{formula} below).

    \begin{Symbol}
        Let $S \subset \{1, 2, \ldots, m\}$. Denote by $\sigma_{S}$ the function $\sigma_S~\colon~\{-1,+1\}^m~\to~\{-1,+1\}$ given by the formula
        $\sigma_{S}(x_1, \ldots, x_m) = \prod_{i \in S} x_i$.
    \end{Symbol}

    \begin{lemma}
        \label{multiplication of sigmas}
        Let $S,T \subset \{1, 2, \ldots, m\}$. Then $\sigma_{S}\sigma_{T}=\sigma_{S \triangle T}$.
    \end{lemma}

    \begin{proof}
        We have $\sigma_{S}\sigma_{T} = \displaystyle{\prod_{i \in S} x_i \prod_{j \in T} x_i = \prod_{i \in S \cap T} x_i^2 \prod_{j \in S \triangle T} x_i} = \prod_{j \in S \triangle T} x_i = \sigma_{S \triangle T}.$
    \end{proof}

    \begin{lemma}
    \label{independency of expectaion}
        The random variables $\sigma_{\{1\}}, \sigma_{\{2\}}, \ldots, \sigma_{\{m\}}$ have expectation $0$ and are mutually independent.
    \end{lemma}

    The proof of this lemma is obvious.

    \begin{lemma}
        \label{expecation of sigmas}
        Let $S \subset \{1, 2, \ldots, m\}$. Then
        \begin{equation*}
            \mathbb{E}\sigma_{S}=
            \begin{cases}
                $1$, &\text{if } S = \emptyset;\\
                $0$, &\text{otherwise.}
            \end{cases}
        \end{equation*}
    \end{lemma}
    \begin{proof}
        If $S = \emptyset$, then $\mathbb{E}\sigma_{\emptyset} = \mathbb{E}1 = 1$.

        If $S \neq \emptyset$, then by Lemma \ref{independency of expectaion} we have
        $$\mathbb{E}(\sigma_{S}) = \mathbb{E}\prod_{i \in S} x_i = \prod_{i \in S} \mathbb{E}x_i = 0.$$
    \end{proof}

    \begin{lemma}[The formula for the Fourier-Walsh coefficients]
        \label{formula}
        Let $T \subset \{1, 2, \ldots, m\}$. Then for any function $f \colon \{-1,+1\}^m \to \mathbb{R}$ we have $\widehat{f_{T}} = \mathbb{E}(f \sigma_{T})$.
    \end{lemma}

    \begin{proof}
        By Definition \ref{Fourier}, Lemmas \ref{multiplication of sigmas} and \ref{expecation of sigmas} we get
        $$\mathbb{E}(f \sigma_{T}) =
        \mathbb{E}\left[\sigma_{T} \cdot \displaystyle{\sum_{S \subset \{1, 2, \ldots, m\}}} \widehat{f_{S}} \cdot \sigma_{S}\right]
        =\mathbb{E}\left[\displaystyle{\sum_{S \subset \{1, 2, \ldots, m\}}} \widehat{f_{S}} \cdot \sigma_{S \triangle T}\right] = \widehat{f_{T}}.$$
    \end{proof}

\subsection{Variance}
\label{Fourier subsection 3}
    In this section, we prove an important formula, which shows a relation between the variance of a Boolean function and the Fourier-Walsh coefficients of this function (Corollary $\ref{Var}$ below). First we prove a simple fact.

    \begin{lemma}
    \label{Var<=1}
        For any function $f \colon \{-1, +1\}^m \to \{0, 1\}$ we have $0 \leqslant Var(f) \leqslant 1.$
    \end{lemma}

    \begin{proof}
    Obviously, $\mathbb{E}f \in [0,1]$. Hence $Var(f) = \mathbb{E}(f^2)-(\mathbb{E}f)^2 = \mathbb{E}f-(\mathbb{E}f)^2 \in [0,1].$
    \end{proof}

    \begin{lemma}
    \label{Parsevall}
        For any function $f \colon \{-1, +1\}^m \to \mathbb{R}$ we have
        $\mathbb{E}(f^2) = \sum_{S \subset \{1, \ldots, m\}} \widehat{f_S}^2$.
    \end{lemma}

    \begin{proof}
        By Lemmas \ref{multiplication of sigmas} and \ref{expecation of sigmas} we get
        $$\mathbb{E}(f^2) = \mathbb{E}\left[\sum_{S \subset \{1, 2, \ldots, m\}} \widehat{f_{S}} \sigma_S\right]^2 =
        \mathbb{E}\left(\sum_{S \subset \{1, 2, \ldots, m\}} \widehat{f_S}^2 \sigma_S^2 + 2\underset{S \neq T}{\sum_{S, T \subset \{1, 2, \ldots, m\}}} \widehat{f_{S}}\widehat{f_{T}} \sigma_{S \triangle T}\right) = \sum_{S \subset \{1, \ldots, m\}} \widehat{f_S}^2.$$
    \end{proof}

    \begin{corollary}
    \label{Var}
        For any function $f \colon \{-1, +1\}^m \to \mathbb{R}$ we have
        $$Var(f) = \underset{S \neq \emptyset}{\sum_{S \subset \{1, \ldots, m\}}}\widehat{f_S}^2.$$
    \end{corollary}

    \begin{proof}
        Since $\sigma_{\emptyset} = 1$, by Lemmas \ref{formula} and \ref{Parsevall} it follows that
        $$Var(f)=\mathbb{E}(f^2)-(\mathbb{E}f)^2 = \underset{S \neq \emptyset}{\sum_{S \subset \{1, \ldots, m\}}}\widehat{f_S}^2.$$
    \end{proof}

\subsection{Increasing Boolean functions}
    In this section, we consider only functions with the values in $\{0, 1\}$.

    \begin{definition}
    \label{increasing}
        Let $a= (a_1, a_2, \ldots, a_m), b = (b_1, b_2, \ldots, b_m) \in \{-1, 1\}^m$. We write $a \leqslant b$, if for each $i=1, \ldots, m$ we have $a_i \leqslant b_i$. A function $f \colon \{-1, +1\}^m \to \{0, 1\}$ is called \textit{increasing}, if the inequality $a\leqslant b$ implies the inequality $f(a)~\leqslant~f(b).$
    \end{definition}

    \begin{definition}
    \label{i-crit}
        We say that a coordinate $i \in \{1, 2, \ldots, m\}$ is \textit{pivotal} for $f \colon \{-1, +1\}^m \to \{0, 1\}$ on an input $x = (x_1, \ldots, x_m) \in \{-1, +1\}$ if
        $$x_i = +1 \text{ and } f(x_1,\ldots,x_{i-1}, +1, x_{i+1}, \ldots,x_m) \neq f(x_1,\ldots,x_{i-1}, -1, x_{i+1},\ldots,x_m).$$
    \end{definition}

    \begin{remark}
        In the commonly used definition of a pivotal coordinate, one does not require the condition $x_i = +1$. However, it is convenient for us to add this condition. We hope this would not confuse reader.
    \end{remark}




    \begin{lemma}
        \label{formula probability}
        For each increasing function $f \colon \{-1, +1\}^m \to \{0, 1\}$ and each $i \in \{1, \ldots, m\}$ we have
        $$\widehat{f_{\{i\}}} = P(\{x \in \{-1, +1\}^m : \text{the coordinate }i\text{ is pivotal for }f\text{ on the input }x\}).$$
    \end{lemma}

    \begin{proof}
        Denote
        \begin{align*}
        B_+ &= \{x \in \{-1, +1\}^m : f(x) = 1, x_i=+1 \};\\
        B_- &= \{x \in \{-1, +1\}^m : f(x) = 1, x_i=-1 \}.
        \end{align*}

        Consider the injection $g \colon B_- \hookrightarrow{} B_+$ defined by the equality
        $$g(x_1, \ldots, x_{i-1},-1,x_{i+1}, \ldots, x_m) = (x_1, \ldots, x_{i-1},+1,x_{i+1}, \ldots, x_m)$$
        for each $x \in B_-$. Since the function $f$ is increasing, it follows that the map is well-defined. Injectivity is obvious. By Lemma \ref{formula} we have
        \begin{align*}
        \widehat{f_{\{i\}}} &= \mathbb{E}(f(x) x_i) = \frac{|B_+|-|B_-|}{2^m} = \frac{|g(B_-)|+|B_+ \setminus g(B_-)|-|B_-|}{2^m} = \frac{|B_+ \setminus g(B_-)|}{2^m} = \\
        &= \frac{|\{x \in \{-1, +1\}^m : \text{the coordinate }i\text{ is pivotal for }f\text{ on the input }x\}|}{2^m} =\\
        &= P(\{x \in \{-1, +1\}^m : \text{the coordinate }i\text{ is pivotal for }f\text{ on the input }x).
        \end{align*}
    \end{proof}

    \begin{lemma}
    \label{inequality}
        Let $S \subset \{1, 2, \ldots, m\}, S \neq \emptyset$. If a function $f~\colon~\{-1, +1\}^m~\to~\{0, 1\}$ is increasing, then for each $i \in S$ we have $|\widehat{f_S}| \leqslant \widehat{f_{\{i\}}}$.
    \end{lemma}

    \begin{proof}
        Fix $i \in S$. Denote
        \begin{align*}
        C_{++} &= \{x \in \{-1, +1\}^m : f(x) = 1, x_i=+1, \sigma_{S}(x) = +1\};\\
        C_{+-} &= \{x \in \{-1, +1\}^m : f(x) = 1, x_i=+1, \sigma_{S}(x) = -1\};\\
        C_{-+} &= \{x \in \{-1, +1\}^m : f(x) = 1, x_i=-1, \sigma_{S}(x) = +1\};\\
        C_{--} &= \{x \in \{-1, +1\}^m : f(x) = 1, x_i=-1, \sigma_{S}(x) = -1\}.
        \end{align*}

        Note that $\{x \in \{-1, +1\}^m : f(x) = 1\} = C_{++} \sqcup C_{+-} \sqcup C_{-+} \sqcup C_{--}$.

        By Lemma \ref{formula} we have
        \begin{equation}
        \label{f_i=}
        \widehat{f_{\{i\}}} = \mathbb{E}(f x_i ) = \frac{|C_{++} \sqcup C_{+-}| - |C_{--} \sqcup C_{-+}|}{2^m};
        \end{equation}

        \begin{equation}
        \label{f_S=}
        \widehat{f_S} = \mathbb{E}(f \sigma_{S})= \frac{|C_{++} \sqcup C_{-+}| - |C_{+-} \sqcup C_{--}|}{2^m}.
        \end{equation}

        Consider the injection $h \colon C_{-+} \hookrightarrow C_{+-}$ defined by the equality
        $$h(x_1, \ldots, x_{i-1},-1,x_{i+1}, \ldots, x_m) = (x_1, \ldots, x_{i-1},+1,x_{i+1}, \ldots, x_m)$$
        for each $x \in C_{-+}$. Since the function $f$ is increasing and $i\in S$, it follows that the map is well-defined. Injectivity is obvious. Thus
        $|C_{++} \sqcup C_{-+}| \leqslant |C_{++} \sqcup C_{+-}|$ and $|C_{+-}~\sqcup~C_{--}|~\geqslant~|C_{--} \sqcup C_{-+}|$. Comparing (\ref{f_i=}) and (\ref{f_S=}) we get $\widehat{f_S} \leqslant \widehat{f_{\{i\}}}$.

        Analogously one can prove that $\widehat{f_S} \geqslant - \widehat{f_{\{i\}}}$.
    \end{proof}

    By Lemma \ref{inequality} we have the following obvious corollary.
    \begin{corollary}
    \label{maximum}
        Let a function $f~\colon~\{-1, +1\}^m~\to~\{0, 1\}$ be increasing. Then the maximal Fourier-Walsh coefficient, besides $\widehat{f_{\emptyset}}$, is one of $\widehat{f_{\{1\}}}, \ldots, \widehat{f_{\{m\}}}$.
    \end{corollary}

\section{Proof of the theorem}
\subsection{Restatement in terms of the Fourier-Walsh coefficients}
\label{peref}

    To each coloring of the set $M_n$ into 4 colors assign three colorings of this set into 2 colors denoted by $\pm 1$. Table \ref{sop} shows which color is assigned to each cell.

    \begin{minipage}[t]{\textwidth}
    \centering
    \captionof{table}{Construction of new colorings}
    \label{sop}
    \begin{tabular}{|c|c|c|c|}
        \hline
        Coloring into 4 colors & Coloring 1 & Coloring 2 & Coloring 3\\
        \hline
        0 & $+1$ & $+1$ & $+1$\\
        1 & $+1$ & $-1$ & $-1$\\
        2 & $-1$ & $+1$ & $-1$\\
        3 & $-1$ & $-1$ & $+1$\\
        \hline
    \end{tabular}
    \end{minipage}

    \begin{remark}
    \label{sved}
        To each coloring into 4 colors assign the pair (coloring 1, coloring 2). This gives a bijection between the colorings of the set $M_n$:
        $$\{\text{Colorings into 4 colors } 0, 1, 2, 3\} \to \{\text{Colorings into 2 colors } \pm1\}^2.$$
        The bijection preserves the measure on the set of colorings.
    \end{remark}

    \begin{remark}
    \label{3th coloring}
        The color of a cell in the coloring 3 equals the product of colors of the cell in colorings~1 and 2.
    \end{remark}

    \begin{definition}[cf. Definition \ref{4 color sides}]
    \label{2 color sides}
         Fix a coloring of the cells of the set $M_n$ into $2$ colors $\pm1$. We say that in this coloring \textit{there exists percolation between two sides $A$ and $B$ of the hexagon $P$ in $M_n$}, if some cell of the side $A$ is joined with some cell of the side $B$ by a chain of adjacent cells such that each cell in the chain has color $+1$, including the initial cell of the side $A$ and the final cell of the side $B$.
    \end{definition}

    \begin{remark}
    \label{stor}
        Fluid $k$ percolates between two sides $A$ and $B$ of the hexagon in a coloring into 4 colors if and only if in the coloring~$k$ there exists percolation between $A$ and $B$.
    \end{remark}

    Suppose that the set $M_n$ has $m$ cells in total. Label the cells by numbers from $1$ to $m$. In what follows, consider the following notation for colorings of the set $M_n$ into 2 or 4 colors.

    \begin{Symbol}[Colorings into 2 colors]
        To each coloring into 2 colors assign a point $$x = (x_1, \ldots, x_m)~\in~\{-1,+1\}^m.$$
    \end{Symbol}

    \begin{Symbol}[Colorings into 4 colors]
        To colorings into 4 colors assign pairs of colorings into 2 colors by the bijection from Remark \ref{sved}. Denote by $x_i$ and $y_i$ colors in the $i$-th cell in colorings 1 and 2 respectively. To each coloring into 4 colors assign a point
        $$(x,y) = (x_1, \ldots, x_m, y_1, \ldots, y_m)~\in~\{-1,+1\}^m~\times~\{-1,+1\}^m.$$
    \end{Symbol}

    For $k=1, 2, 3$ consider the following functions of the coloring $x \in \{-1,+1\}^m$:
    \begin{equation*}
    f^k(x)=
    \begin{cases}
        \multirow{2}{*}{1,} &\text{if in the coloring $x$ there exists percolation} \\
                            &\text{between sides $k$ and $k+3$ of the hexagon $P$;}\\
        0, &\text{otherwise.}
    \end{cases}
    \end{equation*}

    \begin{Symbol}
        Let $S \subset \{1, 2, \ldots, m\}$. Denote by $\sigma_{S}$ and $\tau_S$ functions \\ $\{-1,+1\}^m~\times~\{-1,+1\}^m~\to~\{-1,+1\}$ defined by equations
        \begin{align*}
        \sigma_{S}(x, y) &= \sigma_{S}(x_1, \ldots, x_m, y_1, \ldots, y_m) = \prod_{i \in S} x_i,\\
        \tau_{S}(x, y) &= \tau_{S}(x_1, \ldots, x_m, y_1, \ldots, y_m) = \prod_{i \in S} y_i.
        \end{align*}
    \end{Symbol}

    We state a few properties of functions $\sigma_S$ and $\tau_S$. The proofs are analogous to the proofs of Lemmas \ref{multiplication of sigmas}-\ref{expecation of sigmas}.

    \begin{lemma}
    \label{sigma tau}
        Let $S,T \subset \{1, 2, \ldots, m\}$. Then:\\
        a) $\sigma_{S}\sigma_{T}=\sigma_{S \triangle T}$ and $\tau_{S}\tau_{T}=\tau_{S \triangle T}$.\\
        b) The random variables $\sigma_{\{1\}}, \sigma_{\{2\}}, \ldots, \sigma_{\{m\}}, \tau_{\{1\}}, \tau_{\{2\}}, \ldots, \tau_{\{m\}}$ (on the space $\{-1,+1\}^m\times\{-1,+1\}^m)$ have expectation 0 and are mutually independent.\\
        c)
            $\mathbb{E}(\sigma_{S}\tau_{T})=
            \begin{cases}
                $1$, &\text{if } S = T = \emptyset;\\
                $0$, &\text{otherwise.}
            \end{cases}$
    \end{lemma}

    Now we prove the main lemma of this subsection.

    \begin{lemma}
    \label{formulae for sides probability}
     $P(B_{k,n}) = \widehat{f^k_{\emptyset}}$ for each $k = 1, 2, 3$, and $P(B_{1,n} \cap B_{2,n} \cap B_{3,n}) =~\displaystyle{\sum_{S \subset \{1, \ldots, m\}}~\widehat{f^1_S} \widehat{f^2_S} \widehat{f^3_S}}.$
     \end{lemma}

    \begin{proof}
        In the following computations, expectation in the proof is taken in the space $\{-1,+1\}^m$ in the first three formulae and in the space $\{-1,+1\}^m\times\{-1,+1\}^m$ in the last formula. We denote $f^3(x y) = f^3(x_1 y_1, \ldots, x_m y_m)$.

        By Definition \ref{Fourier}, Lemmas \ref{formula} and \ref{sigma tau}, Remarks \ref{stor}, \ref{sved}, and \ref{3th coloring} we have
        \begin{align*}
            P(B_{1,n}) &= \frac{|\{(x,y) \in \{-1,+1\}^m\times\{-1,+1\}^m : f^1(x) = 1\}|}{4^m} =
            \frac{2^m |\{x \in \{-1,+1\}^m : f^1(x) = 1\}|}{4^m} = \\
            &= \mathbb{E} f^1 = \widehat{f^1_{\emptyset}};\\
            P(B_{2,n}) &= \frac{|\{(x,y) \in \{-1,+1\}^m\times\{-1,+1\}^m : f^2(y) = 1\}|}{4^m} =
            \frac{2^m |\{y \in \{-1,+1\}^m : f^1(y) = 1\}|}{4^m} = \\
            &= \mathbb{E} f^2 = \widehat{f^2_{\emptyset}};\\
            P(B_{3,n}) &= \frac{|\{(x,y) \in \{-1,+1\}^m\times\{-1,+1\}^m : f^3(x y) = 1\}|}{4^m} =
            \frac{2^m |\{z \in \{-1,+1\}^m : f^3(z) = 1\}|}{4^m} = \\
            &= \mathbb{E} f^3 = \widehat{f^3_{\emptyset}}.
        \end{align*}
        (Note that the proof of the third formula is slightly different from the first two proofs.)

        \begin{align*}
        &P(B_{1,n} \cap B_{2,n} \cap B_{3,n}) = \frac{|\{(x,y)\in\{-1,+1\}^m\times\{-1,+1\}^m : f^1(x) f^2(y) f^3(x y) = 1\}|}{4^m} = \\
        &= \mathbb{E} \left[f^1(x) f^2(y) f^3(x y)\right] = \mathbb{E}\left[\sum_{S \subset \{1, \ldots, m\}} \widehat{f^1_{S}} \sigma_S \sum_{S \subset \{1, \ldots, m\}} \widehat{f^2_{S}} \tau_S \sum_{S \subset \{1, \ldots, m\}} \widehat{f^3_{S}} \sigma_S \tau_S\right] =\\
        &= \mathbb{E}\left[\sum_{S \subset \{1, \ldots, m\}}~\widehat{f^1_S} \widehat{f^2_S} \widehat{f^3_S} \sigma_S^2 \tau_S^2 + \underset{\text{not all }S, T, R \text{ are equal}}{\sum_{S, T, R \subset \{1, \ldots, m\}}} \widehat{f^1_S} \widehat{f^2_T} \widehat{f^3_R} \sigma_{S \triangle R}  \tau_{T \triangle R}\right] =
        \sum_{S \subset \{1, \ldots, m\}}~\widehat{f^1_S} \widehat{f^2_S} \widehat{f^3_S}.
        \end{align*}
    \end{proof}

    \begin{corollary}
    \label{sides refolmulation}
       $P(B_{1,n} \cap B_{2,n} \cap B_{3,n}) -  P(B_{1,n}) P(B_{2,n}) P(B_{3,n}) = \displaystyle{\underset{S \neq \emptyset}{\sum_{S \subset \{1, \ldots, m\}}}}~\widehat{f^1_S} \widehat{f^2_S} \widehat{f^3_S}.$
    \end{corollary}

    Thus for the proof of Theorem \ref{sides} it remains to prove that $$\lim_{n \to \infty}\underset{S \neq \emptyset}{\sum_{S \subset \{1, \ldots, m\}}}~\widehat{f^1_S} \widehat{f^2_S} \widehat{f^3_S} = 0.$$

\subsection{Pivotal colorings}

    Obviously, the functions $f^1, f^2, f^3$ are increasing. Therefore, all the lemmas from \S \ref{section fourier} hold for these functions. In the following lemma we make use of this.

    \begin{lemma}
    \label{ineq}
        $$\left|\underset{S \neq \emptyset}{\sum_{S \subset \{1, \ldots, m\}}}~\widehat{f^1_S} \widehat{f^2_S} \widehat{f^3_S}\right| \leqslant \max_{i \in \{1, \ldots, m \}} P(\{x \in \{-1,+1\}^m :\text{the coordinate }i\text{ is pivotal for }f^1\text{ on the input }x\}).$$
    \end{lemma}

    \begin{proof} By the Cauchy-Bunyakovsky-Schwarz inequality, Corollary \ref{Var}, and Lemmas \ref{Var<=1}, \ref{inequality}, and \ref{formula probability} we have
        \begin{align*}
        \left|\underset{S \neq \emptyset}{\sum_{S \subset \{1, \ldots, m\}}}~\widehat{f^1_S} \widehat{f^2_S} \widehat{f^3_S}\right| &\leqslant
        \underset{S \neq \emptyset}{\sum_{S \subset \{1, \ldots, m\}}}~|\widehat{f^2_S}| |\widehat{f^3_S}| |\widehat{f^1_S}| \leqslant
        \max_{S \neq \emptyset}{|\widehat{f^1_S}|} \underset{S \neq \emptyset}{\sum_{S \subset \{1, \ldots, m\}}}~|\widehat{f^2_S}| |\widehat{f^3_S}| \leqslant\\
        &\leqslant \max_{S \neq \emptyset}{|\widehat{f^1_S}|} \sqrt{Var(f^2) Var(f^3)} \leqslant
        \max_{S \neq \emptyset}{|\widehat{f^1_S}|} =
        \max_{i \in \{1, \ldots, m \}} {\widehat{f^1_{\{i\}}}} = \\
        &= \max_{i \in \{1, \ldots, m \}} P(\{x \in \{-1,+1\}^m :\text{the coordinate }i\text{ is pivotal for }f^1\text{ on the input }x\}).
        \end{align*}
    \end{proof}

    \begin{lemma} For each cell $i \in \{1, \ldots, m\}$ of the set $M_n$ we have
    \label{ineq2}
        \begin{align*}
            P(\{x \in &\{-1,+1\}^m :\text{the coordinate }i\text{ is pivotal for }f^1\text{ on the input }x\}) \leqslant \\
             \leqslant \min\{&P(\{x \in \{-1,+1\}^m :\text{in the coloring } x  \text{ there exists percolation from the cell }i\text{ to the side }1\}),\\
             &P(\{x \in \{-1,+1\}^m :\text{in the coloring } x  \text{ there exists percolation from the cell }i\text{ to the side }4\})\}.
        \end{align*}
    \end{lemma}

    \begin{proof}
        If the coordinate $i$ is pivotal for $f^1$ on the input $x$, then in the coloring $x$, a cell of the side~$1$ is joined with some cell of the side~$4$ by a chain of adjacent cells such that each cell in the chain has color $+1$. If the color of the cell $i$ is changed, then percolation disappears. Hence the chain must contain the cell $i$, and thus there is a chain of color $+1$ from the cell $i$ to the side~$1$, and a chain of color $+1$ from the cell $i$ to the side~$4$. This implies the required inequality.
    \end{proof}

\subsection{Conclusion of the proof}

    Let $i \in M_n$ be a cell (see Figure 2a). Denote by $r(i)$ the maximal distance from the center of the cell $i$ to the lines containing the sides $1$ and $4$ of the hexagon $P$. Denote by $P(i)$ the rectangle $2r(i) \times 8r(i)$ centered at the center of the cell~$i$, with the larger side containing one of the side $1$ or~$4$ of the hexagon~$P$ (See Figure 2b). Without loss of generality we assume that the larger side of the rectangle contains the side $1$ of the hexagon.

    \begin{lemma}
    \label{rectangle}
        The hexagon $P$ is entirely contained inside the rectangle $P(i)$. 
    \end{lemma}

    \begin{proof}
        (See Figure 2c). We introduce the Cartesian coordinate system centered at the center of the cell $i$, such that the $x$-axis is parallel to the line containing the side $1$. Then the rectangle $2r(i) \times 8r(i)$ is given by the system
        \begin{equation}
        \label{rect}
            \begin{cases}
                -4r(i) \leqslant x \leqslant 4r(i),\\
                -r(i) \leqslant y \leqslant r(i).
            \end{cases}
        \end{equation}
        The "long diagonal" (see Figure~2d) of the hexagon~$P$ equals $2$, hence the hexagon~$P$ is contained inside the rectangle
        \begin{equation}
        \label{rect2}
            \begin{cases}
                -2 \leqslant x \leqslant 2,\\
                -r(i) \leqslant y \leqslant r(i).
            \end{cases}
        \end{equation}
        The "small diagonal" (see Figure~2d) of the hexagon~$P$ equals $\sqrt{3}$, hence $r(i)\geqslant\sqrt{3}/2$. Therefore, if a point $(x, y)$ satisfies system~(\ref{rect2}), then it also satisfies system~(\ref{rect}).
    \end{proof}

    \begin{figure}[h]
     \centering
     \begin{subfigure}[t]{0.26\textwidth}
        \raisebox{-\height}{\includegraphics[width=\textwidth]{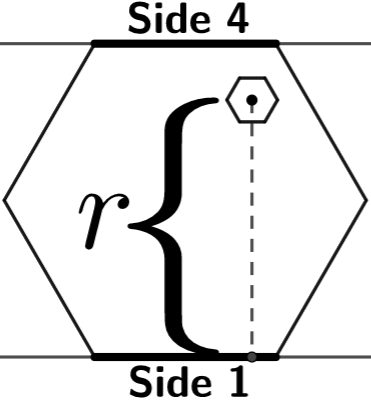}}
        \caption{}
     \end{subfigure}
    \hfill
     \begin{subfigure}[t]{0.32\textwidth}
        \raisebox{-\height}{\includegraphics[width=\textwidth]{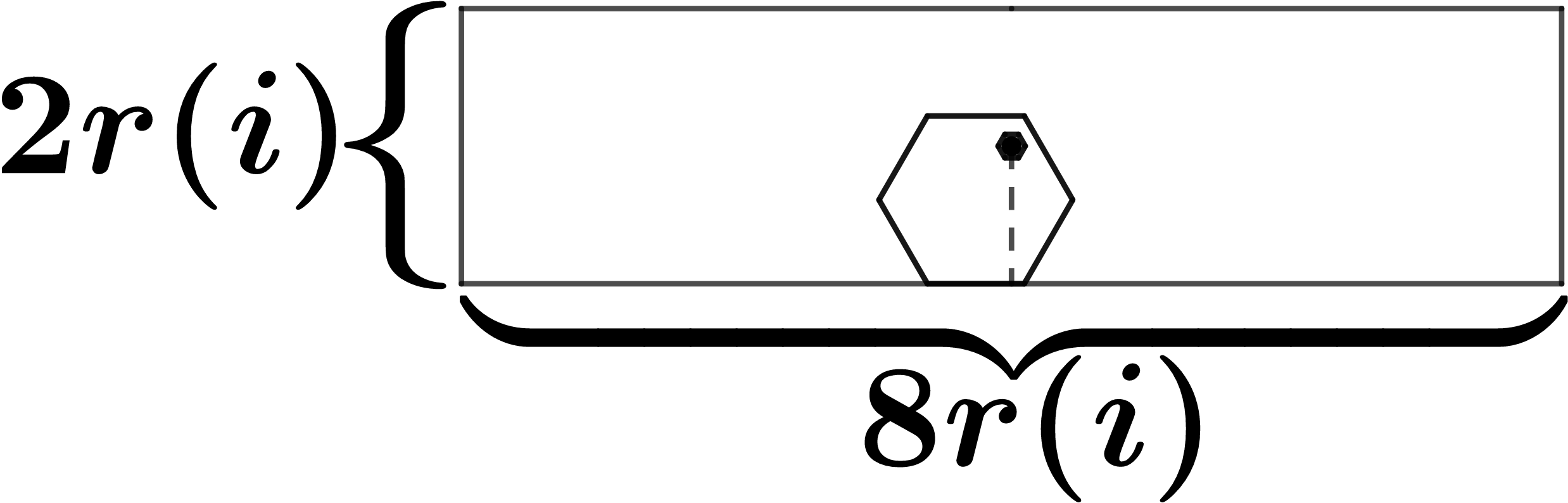}}
        \caption{}
        \vspace{.3ex}
        \raisebox{-\height}{\includegraphics[width=\textwidth]{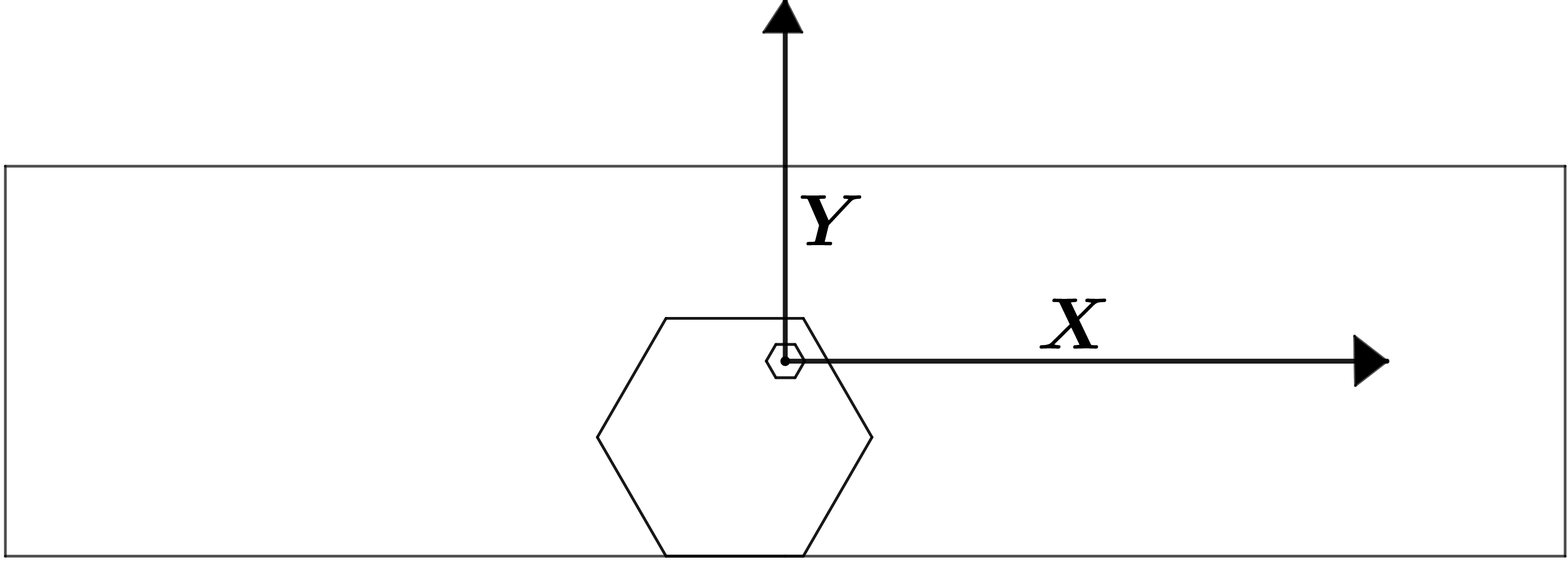}}
        \caption{}
     \end{subfigure}
    \hfill
     \begin{subfigure}[t]{0.36\textwidth}
        \raisebox{-\height}{\includegraphics[width=\textwidth]{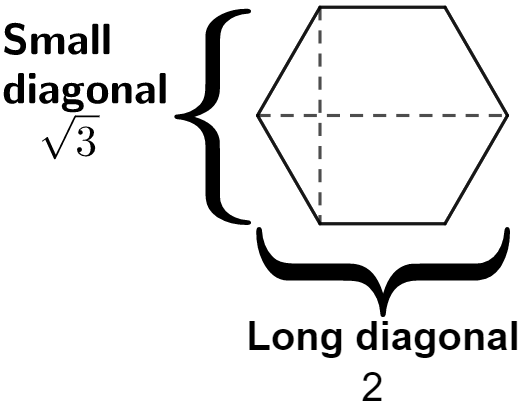}}
     \caption{}
     \end{subfigure}
     \caption{}
    \end{figure}

    To the set $M_n$ add all the cells that are entirely contained in the rectangle $P(i)$. Denote by $M'_n$ the resulting set. In what follows, the colorings of the set $M'_n$ into 2 colors $\pm 1$ are denoted by points in the set $\{-1,+1\}^{|M'_n|}$.

    \begin{lemma}
    \label{ineq3}
        The following inequalities hold:
        \begin{align*}
            &P(\{x \in \{-1,+1\}^m : \text{in the coloring } x  \text{ there exists percolation from the cell }i\text{ to the side }1\}) \leqslant\\
            &P(\{x \in \{-1,+1\}^{|M'_n|} : \text{in the coloring } x  \text{ there exists percolation from the cell }i\text{ to the side }1\}) \leqslant\\
            &P(\{x \in \{-1,+1\}^{|M'_n|} : \text{in the coloring } x  \text{ there exists percolation from the cell }i\text{ to the boundary of } M'_n\}).
        \end{align*}
    \end{lemma}

    \begin{proof}
        By Lemma \ref{rectangle} we have $M_n \subset M'_n$. If there exists percolation in the coloring $x \in \{-1,+1\}^m$, then there exists percolation in all the colorings $y \in \{-1,+1\}^{|M'_n|}$ coinciding with the coloring~$x$ on the set~$M_n$. This implies the first inequality.

        If in some coloring of the set $M_n$ there exists percolation from the cell $i$ to the side $1$, then by Lemma \ref{rectangle} in this coloring there exists percolation from the cell $i$ to a cell of the boundary of the set~$M'_n$. This implies the second inequality.
    \end{proof}

    \begin{lemma}
    \label{osnova}
        We have
        \begin{align*}
            \min\{&P(\{x\in\{-1,+1\}^m : \text{in the coloring } x  \text{ there exists percolation from the cell }i\text{ to the side }1\}), \\
            &P(\{x\in\{-1,+1\}^m : \text{in the coloring } x  \text{ there exists percolation from the cell }i\text{ to the side }4\})\}           \leqslant \\
            P(\{&\text{in the set } M_n \text{ there exists percolation from the cell } O \text{ to the boundary}\}).
        \end{align*}
    \end{lemma}

    \begin{proof}
        Consider the regular hexagon centered at the center of the cell~$i$, obtained from the hexagon~$P$ by translation. Since $r(i)~\geqslant~\sqrt{3}/2$ (see. Figure~2d), it follows that the resulting hexagon is entirely contained inside the rectangle~$P(i)$. By Lemma~\ref{ineq3} this implies the required inequality.
    \end{proof}

    \begin{proof}[Proof of Theorem 1]
    Theorem 1 follows directly from Corollary \ref{sides refolmulation}, Lemmas \ref{ineq}, \ref{ineq2}, and \ref{osnova}, and Theorem \ref{kesten lite}.
    \end{proof}

\subsection{Arguments in favor of Conjecture \ref{hyp2}}
    In conclusion, we give informal arguments in favor of Conjecture \ref{hyp2}. In the (degenerate) case when the hexagon $P$ consists of a single cell, we have $\frac{P(A_{1,n} \cap A_{2,n} \cap A_{3,n})}{P(A_{1,n}) P(A_{2,n}) P(A_{3,n})}=1$. In Example~1, the set~$M_n$ is larger, and we have $\frac{P(A_{1,n} \cap A_{2,n} \cap A_{3,n})}{P(A_{1,n}) P(A_{2,n}) P(A_{3,n})}>1$. A natural conjecture is that as the number of the cells in the set $M_n$ increases, the ratio in question also increases. We also give the following more solid argument by K.~Izyurov.

    Consider the following function of a coloring $x \in \{-1,+1\}^m$:
    \begin{equation*}
    \chi(x) = \chi(x_1, \ldots, x_m)=
    \begin{cases}
        \multirow{2}{*}{1,} &\text{if in the coloring $x$ there exists percolation between } \\
                            &\text{the center and the boundary of the set $M_n$;}\\
        0, &\text{otherwise.}
    \end{cases}
    \end{equation*}

    Now we state a lemma and a corollary. (The proofs are analogous to the proof of Lemma \ref{formulae for sides probability}.)
    \begin{lemma}
    \label{center fourier}
     $P(A_{1,n}) = P(A_{2,n}) = P(A_{3,n}) = \widehat{\chi_{\emptyset}}$ and $P(A_{1,n} \cap A_{2,n} \cap A_{3,n}) =~\displaystyle{\sum_{S \subset \{1, \ldots, m\}}~\widehat{\chi_S}^3}.$
    \end{lemma}

    \begin{corollary}
    $P(A_{1,n} \cap A_{2,n} \cap A_{3,n}) -  P(A_{1,n}) P(A_{2,n}) P(A_{3,n}) =
    \displaystyle{\underset{S \neq \emptyset}{\sum_{S \subset \{1, \ldots, m\}}} \widehat{\chi_S}^3}.$
    \end{corollary}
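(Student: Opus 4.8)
The plan is to obtain this corollary from Lemma~\ref{ruscenter fourier} by a one-line manipulation, in complete parallel with the way Corollary~\ref{russides refolmulation} follows from Lemma~\ref{rusformulae for sides probability}. First I would invoke the first assertion of Lemma~\ref{ruscenter fourier}, $P(A_{1,n}) = P(A_{2,n}) = P(A_{3,n}) = \widehat{\chi_{\emptyset}}$, to rewrite the product on the left-hand side as $P(A_{1,n}) P(A_{2,n}) P(A_{3,n}) = \widehat{\chi_{\emptyset}}^{3}$. Then I would use the second assertion, $P(A_{1,n} \cap A_{2,n} \cap A_{3,n}) = \sum_{S \subset \{1, \ldots, m\}} \widehat{\chi_S}^{3}$, and isolate the $S = \emptyset$ summand; since $\prod_{i \in \emptyset} x_i = 1$ (the convention in Definition~\ref{rusFourier}), that summand is precisely $\widehat{\chi_{\emptyset}}^{3}$. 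Subtracting, the two copies of $\widehat{\chi_{\emptyset}}^{3}$ cancel and what remains is $\underset{S \neq \emptyset}{\sum_{S \subset \{1, \ldots, m\}}} \widehat{\chi_S}^{3}$, which is the asserted identity.

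Since all the substantive work is already packed into Lemma~\ref{ruscenter fourier}, there is no genuine obstacle in the corollary itself; it is a routine algebraic consequence. The only point that deserves a comment is why the three center-probabilities coincide and equal the single Fourier coefficient $\widehat{\chi_{\emptyset}} = \mathbb{E}\chi$: this is the content of the first half of Lemma~\ref{ruscenter fourier}, which in turn is established (by analogy with the proof of Lemma~\ref{rusformulae for sides probability}) from the symmetry of the three colorings under the correspondence of Remark~\ref{russved} together with the reduction of coloring~$3$ to colorings~$1$ and~$2$ in Remark~\ref{rus3th coloring}. Granting that lemma, the corollary follows immediately, and — as in the remark after Corollary~\ref{russides refolmulation} — it recasts Hypothesis~\ref{rushyp1} as the positivity of the sum $\underset{S \neq \emptyset}{\sum} \widehat{\chi_S}^{3}$.
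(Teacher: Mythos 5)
Your proof is correct and follows exactly the route the paper intends: the corollary is an immediate consequence of Lemma~\ref{ruscenter fourier}, obtained by writing $P(A_{1,n})P(A_{2,n})P(A_{3,n}) = \widehat{\chi_{\emptyset}}^{3}$ and cancelling it against the $S=\emptyset$ term of the sum, in complete parallel with Corollary~\ref{russides refolmulation}. The paper leaves this one-line verification implicit, so there is nothing to add.
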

    Thus Conjectures \ref{hyp1} and \ref{hyp2} are equivalent to the inequalities
    $$\displaystyle{\underset{S \neq \emptyset}{\sum_{S \subset \{1, \ldots, m\}}} \widehat{\chi_S}^3} > 0\text{ and }\displaystyle{\lim_{n \to +\infty}}{\underset{S \neq \emptyset}{\sum_{S \subset \{1, \ldots, m\}}} \left(\frac{\widehat{\chi_S}}{\widehat{\chi_{\emptyset}}}\right)}^3 > 0.$$

    The latter sum contains
    $$\sum_{\text{$s$ is adjacent to the center $O$}} \left(\frac{\widehat{\chi_S}}{\widehat{\chi_{\emptyset}}}\right)^3.$$
    Consider some cell $i$ among the 6 ones adjacent to the center $O$. The coefficient $\chi_{\{i\}}$ equals the probability that the coordinate $i$ is pivotal. It is easy to prove that the probability that the coordinate $i$ is pivotal conditioned by the existence of percolation (i.e., $\chi_{\{i\}} / \chi_{\emptyset}$) is small, but positive and does not tend to $0$ as $n \to \infty$. Hence, these 6 summands make a contribution that does not disappear in the limit (and the desired limit can be equal to $0$, only if this contribution is canceled with other Fourier-Welsh coefficients, what no reason can be seen for). Since a contribution is small, it is not noticed in numerical experiments.

\section{Numerical experiments}
\label{computer}
    In order to verify Conjectures \ref{hyp1}-\ref{hyp2} about percolation from the center, a computer program was written. (See C++ code in \cite{programm}.)

    Each side of the hexagon $P$ is taken to be parallel to some side of a lattice cell. The program considers $k$ random colorings of the polygon $M_n$. Depending on input $p = 1, 2,$ or $3$, it computes the number $T_p$ of colorings with percolation of all the fluids $1, \ldots, p$. (For example, if $p=2$, then the program computes the number of colorings with percolation of both fluids $1$ and $2$.)\\

    \begin{center}
    \begin{tabular}{|c|c|}
        \hline
        Input & Output\\
        \hline
        $n$ & $T_p$\\
        $k$ & \\
        $p$ & \\
        \hline
    \end{tabular}
    \end{center}

    Output results for some $n$, $k$, and $p$ are shown in Table \ref{program}. Additionally, the value $P = (T_3/k)/(T_1/k)^3 \approx \frac{P(A_{1,n} \cap A_{2,n} \cap A_{3,n})}{P(A_{1,n}) P(A_{2,n}) P(A_{3,n})}$ is computed. Presence of values $P < 1$ does not disprove Conjecture \ref{hyp1}, because it can be caused by statistical deviations. For $n=500, k=1 000 000, p=1$ the program runs approximately 2 hours.

    \begin{table}[H]
    \caption{Output results}
    \begin{center}
    \label{program}
    \begin{tabular}{|c|c|c|c|c|c|c|}
        \hline
        n & k & $T_1$ & $T_2$ & $T_3$ & $P$ & $P-1$\\
        \hline
        3& 10 000 000& 9 844 112& 9 690 354& 9 538 785& 1.000 & <0.001\\
        4& 10 000 000& 9 843 390& 9 691 475& 9 537 528& 1.000 & <0.001\\
        5& 10 000 000& 9 655 567& 9 327 500& 9 009 425 & 1.001 & 0.001\\
        6& 10 000 000& 9 575 758& 9 166 785 & 8 779 321& 1.000 & <0.001\\
        7& 10 000 000& 9 478 577& 8 985 300& 8 522 311& 1.001 & 0.001\\
        8& 10 000 000& 9 368 765& 8 778 158& 8 229 285& 1.001 & 0.001\\
        9& 10 000 000& 9 285 683& 8 630 914& 8 016 759& 1.001 & 0.001\\
        10& 10 000 000& 9 211 830& 8 475 622& 7 813 657& 1.000 & <0.001\\
        15& 10 000 000& 8 898 951& 7 893 907& 7 030 482& 0.998 & -0.002\\
        20& 10 000 000& 8 657 067& 7 466 236& 6 462 095& 0.996& -0.004\\
        25& 10 000 000& 8 461 617& 7 166 402& 6 016 836& 0.993& -0.007\\
        50& 1 000 000& 787 626& 621 883& 490 967& 1.005& 0.005\\
        100& 1 000 000& 733 994& 539 606& 395 491& 1.000 & <0.001\\
        150& 1 000 000& 707 948& 500 289& 354 320& 0.999 & -0.001\\
        200& 1 000 000& 681 800& 466 133& 318 058& 1.004 & 0.004\\
        300& 1 000 000& 660 490& 429 462& 283 512& 0.984 & -0.016\\
        400& 1 000 000& 633 621& 402 712& 259 337& 1.019 & 0.019\\
        500& 1 000 000& 621 187& 386 527& 242 821& 1.013& 0.013\\
        1500& 1 000 000& 555 244& 299 740& 166 587& 0.973 & -0.027\\
        \hline
    \end{tabular}
    \end{center}
    \end{table}

    Let us describe the algorithm checking whether fluid $1$ percolates from the center $O$ to the boundary in a given coloring. Since the color of the center is irrelevant, we may assume that the center has color $0$. (See Table \ref{examples} for examples.)

    Fix a coloring. At the beginning, a beetle sits at the center. The beetle is allowed to move to an adjacent cell with color $0$ or $1$. We are going to find out if the beetle can reach the boundary.

    \setlength{\parindent}{0pt}

    \textbf{Step~1.} If the beetle is already in a boundary cell, then go to END1. Otherwise go to Step~2.

    \textbf{Step~2.} If the color of the cell below the beetle is $0$ or $1$, then move the beetle downwards, and go to Step~1.
    If the color of the cell below the beetle is $2$ or $3$, then go to Step~3.

    \textbf{Step~3.} (Building of a wall; see Table \ref{examples})

    Construct a finite sequence (called a \textit{wall} in what follows) of distinct adjacent sides of cells (called \textit{wall segments}) as follows. The first term of the sequence is the bottom side of the cell where the beetle is located, and the second term of the sequence is adjacent to the left endpoint of the first wall segment. All the cells from one side of the wall have color $0$ or $1$, and all the cells from the other side of the wall have color $2$ or $3$. And finally, either the last wall segment lies between two boundary cells or all the wall segments surround some collection of cells of $M_n$. In the former case go to END1 and in the latter case go to Step~4.

    (It is easy to see that the wall is uniquely defined by the conditions above.)

    \textbf{Step~4.} Draw the ray from the center of $M_n$ to the bottom. If the ray intersects with wall segments (which were constructed in Step~3) an odd number of times, then go to END2. Otherwise, move the beetle  to the cell right below the lowest wall segment intersecting the ray, and go to Step~1.

    \textbf{END1.} There is a percolation from the center to the boundary.

    \textbf{END2.} There is no percolation from the center to the boundary.

    The following table shows how the algorithm works. In all three examples $n=7$.

    \begin{table}[H]
    \begin{center}
    \caption{Percolation-checking algorithm}
    \label{examples}
    \begin{tabular}{| p{4.1cm} | p{4.1cm} | p{4.1cm} | p{4.1cm} |}
        \hline
        \multicolumn{4}{|c|}{\includegraphics[trim=0 60 0 -1, width=0.75\textwidth]{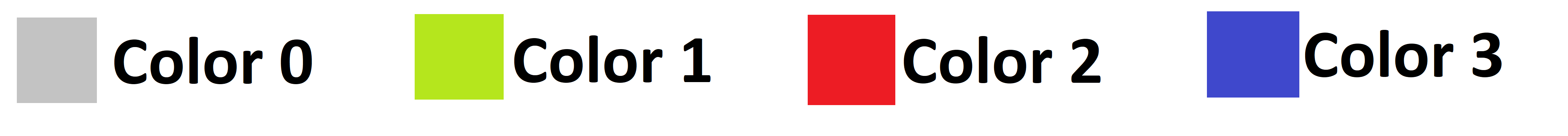}}\\
        \hline
        \multicolumn{4}{|c|}{Example 1} \\
        \hline
            \includegraphics[trim=0 0 0 -1,width=0.24\textwidth]{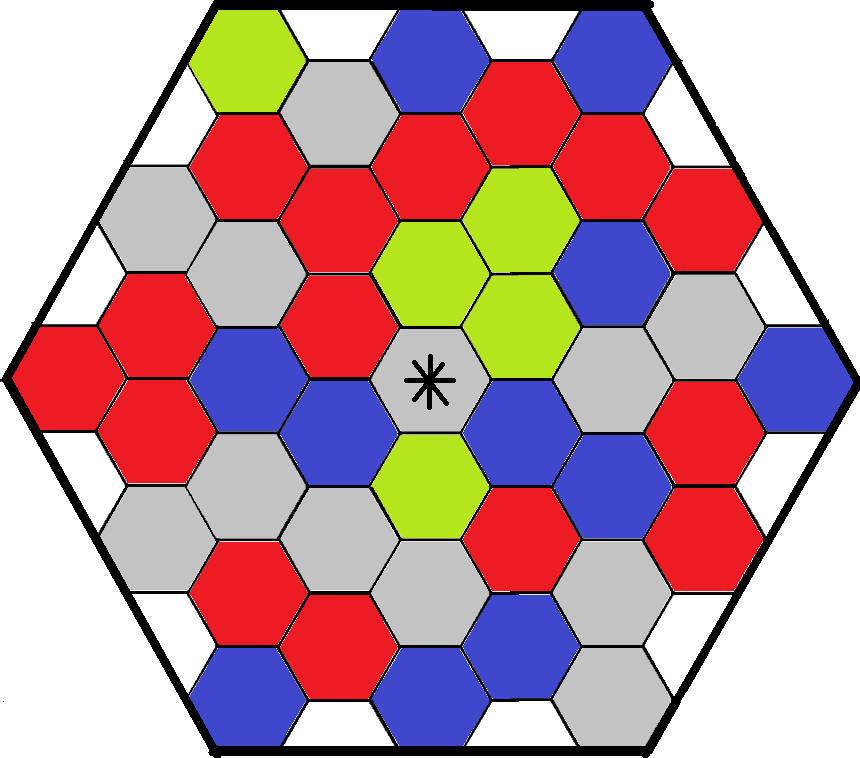}
            & \includegraphics[width=0.24\textwidth]{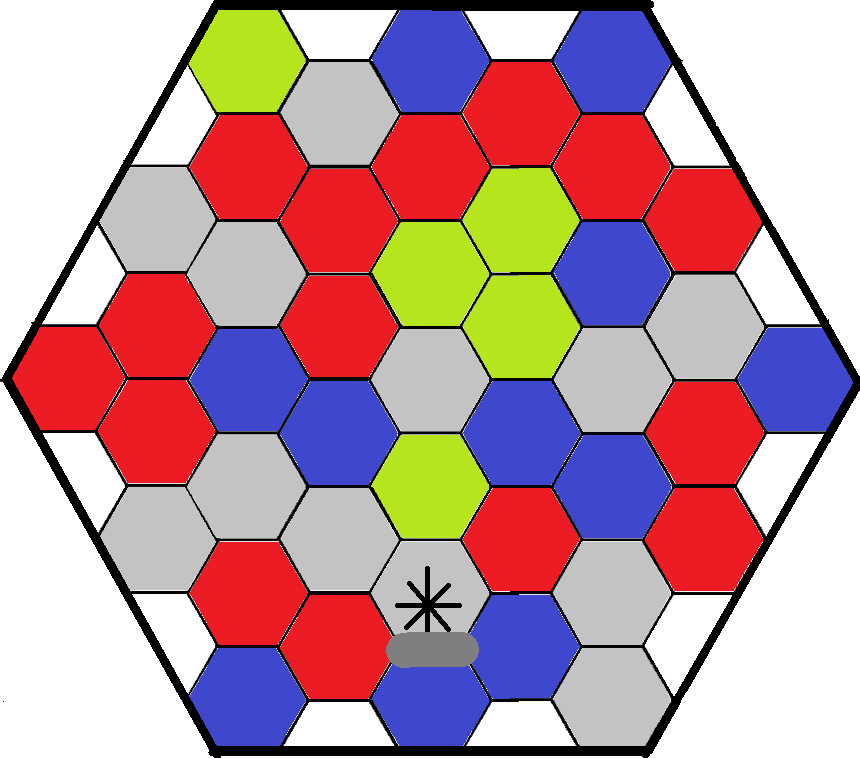}
            & \includegraphics[width=0.24\textwidth]{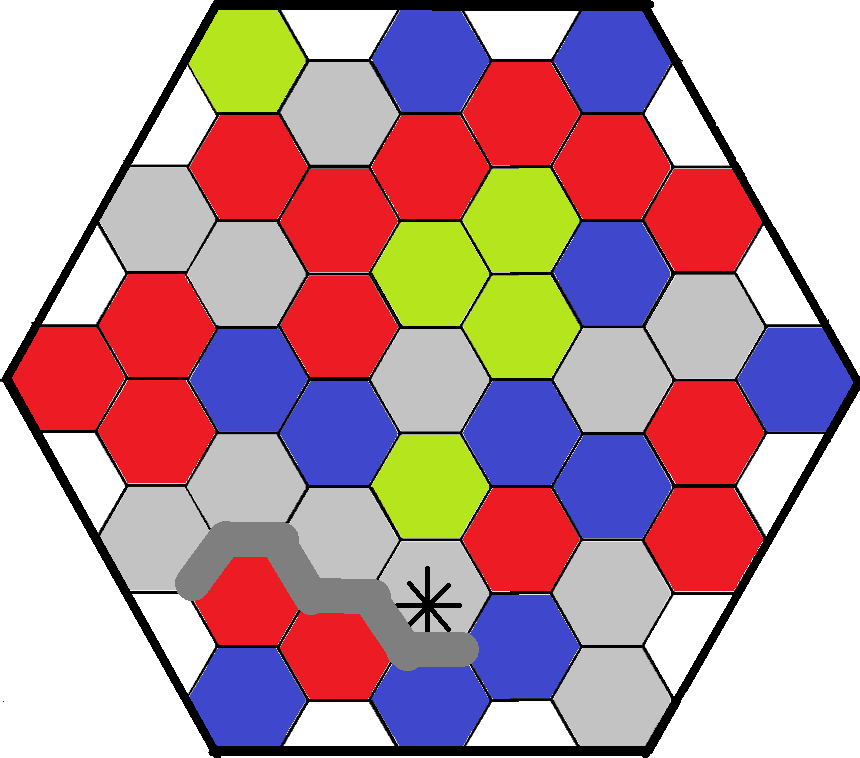}
            & \includegraphics[width=0.24\textwidth]{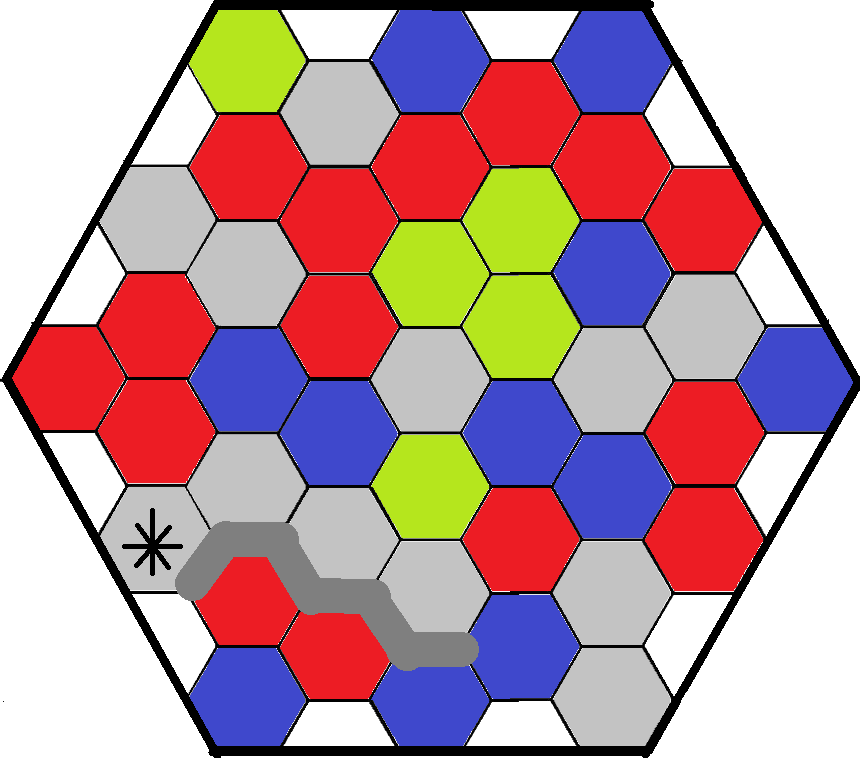}\\
        \hline
        At the beginning, the beetle is sits at the center.&
        The beetle moves downwards until it hits a blue cell. We put a wall segment right below the beetle.&
        We build the wall until we put a wall segment between two boundary cells.&
        The beetle can move along the wall to reach the boundary.\\
        \hline
        \multicolumn{4}{|c|}{Example 2} \\
        \hline
            \includegraphics[trim=0 0 0 -2, width=0.24\textwidth]{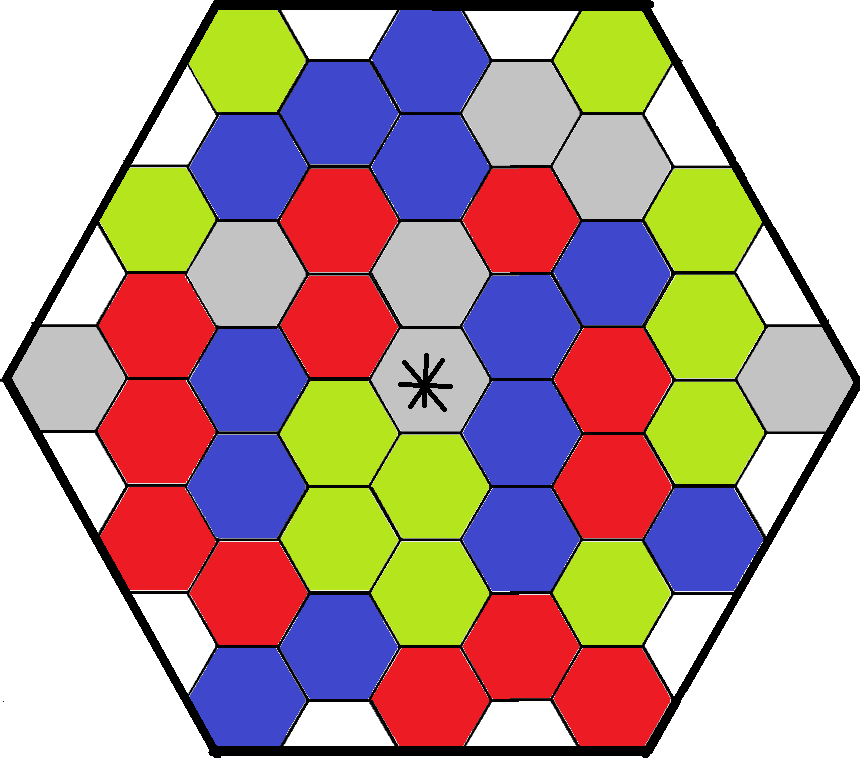}
            & \includegraphics[width=0.24\textwidth]{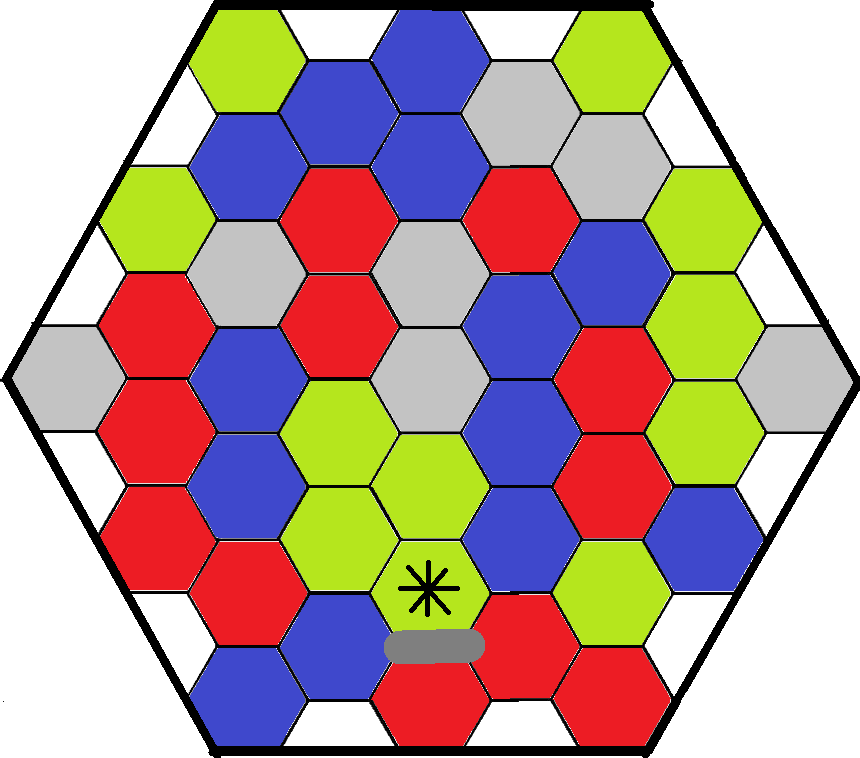}
            & \includegraphics[width=0.24\textwidth]{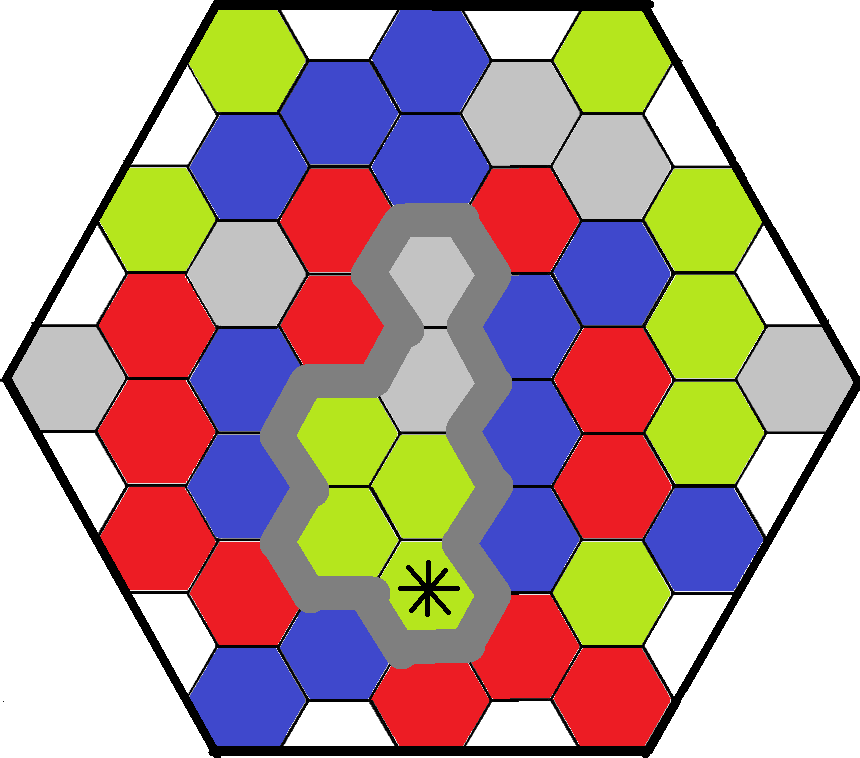}
            & \includegraphics[width=0.24\textwidth]{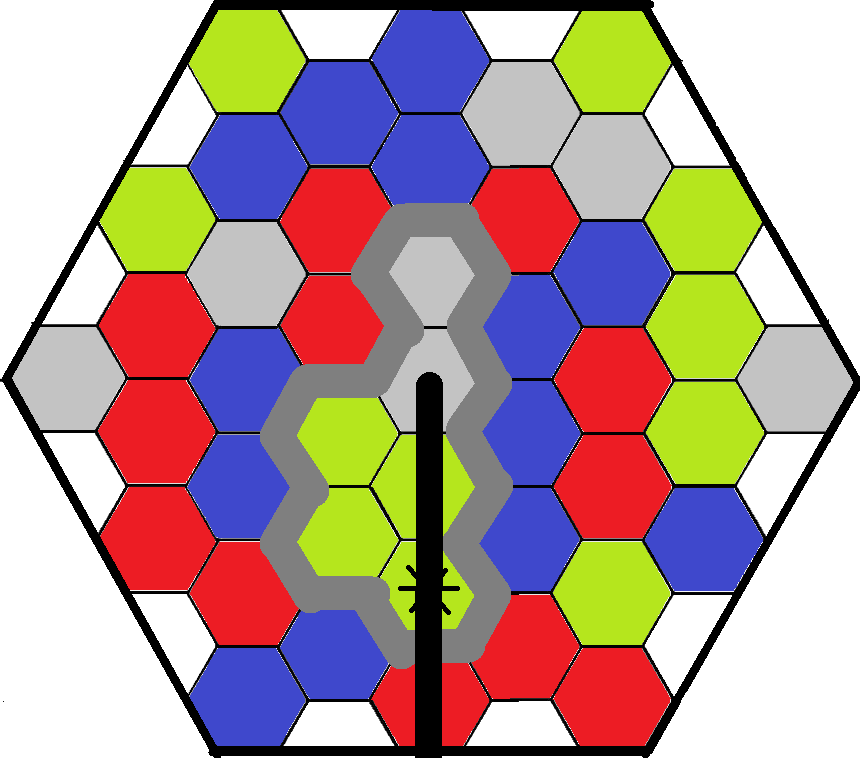}\\
        \hline
        At the beginning, the beetle sits at the center.&
        The beetle moves downwards until it hits a red cell. We put a wall segment right below the beetle.&
        We build the wall until it encloses some area of $M_7$.&
        We draw the ray from the center to the bottom. The ray intersects a unique wall segment. Hence the beetle cannot reach the boundary.\\
        \hline
        \multicolumn{4}{|c|}{Example 3} \\
        \hline
            \includegraphics[trim=0 0 0 -2,width=0.24\textwidth]{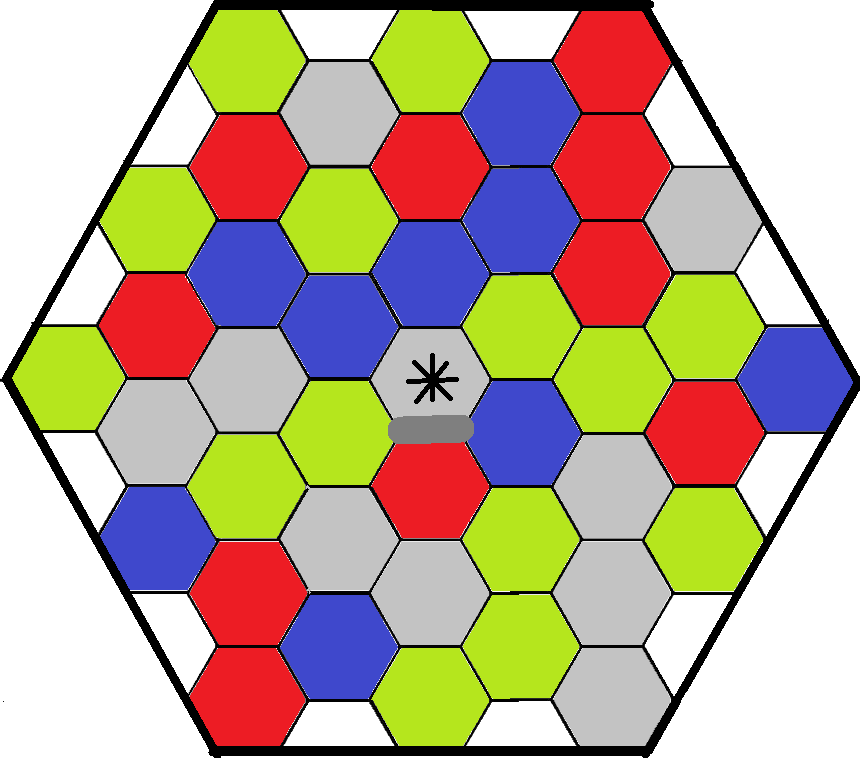}
            & \includegraphics[width=0.24\textwidth]{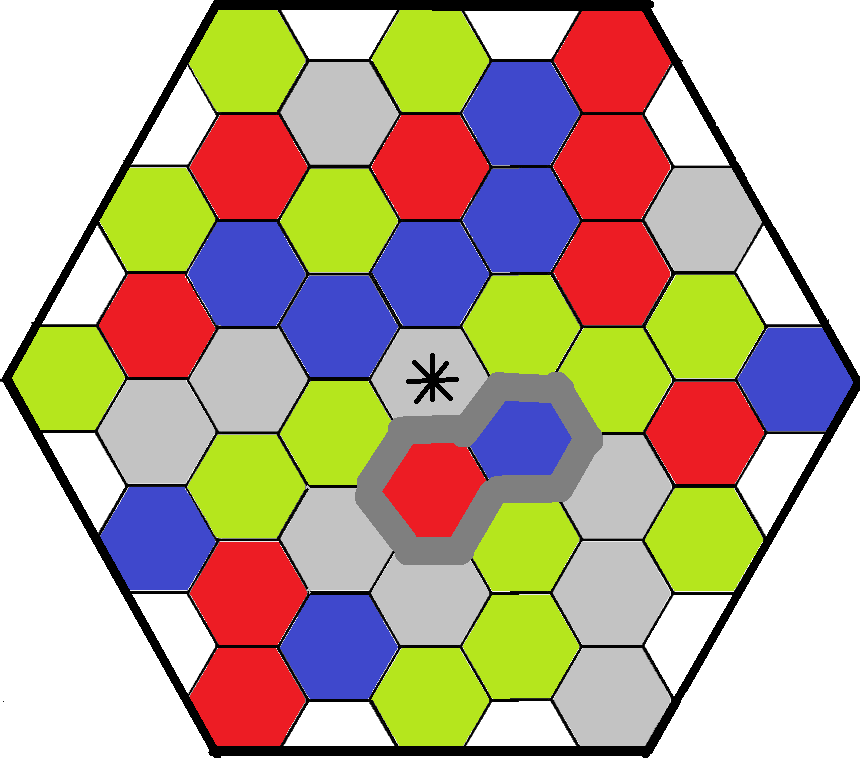}
            & \includegraphics[width=0.24\textwidth]{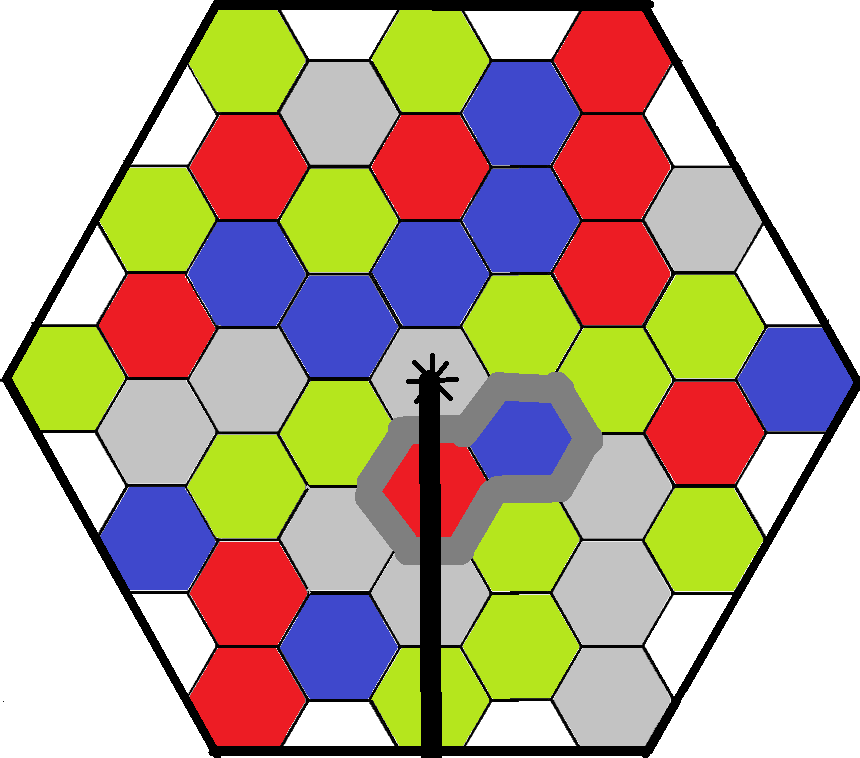}
            & \includegraphics[width=0.24\textwidth]{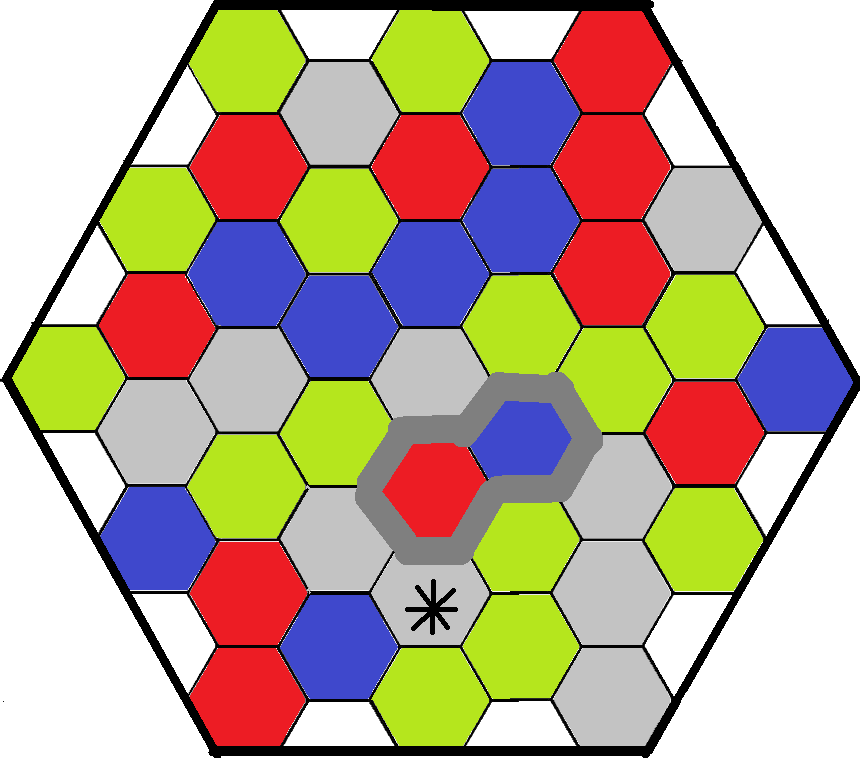}\\
        \hline
        At the beginning, the beetle sits at the center. It cannot move downwards, thus we put a wall segment right below the beetle. &
        We build a wall until it surrounds some area of $M_7$.&
        We draw the ray from the center to the bottom. The ray intersects two wall segments.&
        The beetle gets around the wall. Afterwards, it moves down and reaches the boundary.\\
        \hline
    \end{tabular}
    \end{center}
    \end{table}

\section{Acknowledgements}
    The author is grateful to his research advisor M.~Skopenkov for setting the problems and constant attention to this work; to K.~Izyurov, A.~Magazinov, and M.~Khristoforov for reading this work and sending valuable comments; to K.~Izyurov and A.~Magazinov for telling the proof of Theorem~\ref{sides}.

    {}
\end{document}